\tikzstyle{vertex}=[draw=black,circle,fill=black,minimum size=4pt, inner sep=0pt, outer sep=0pt,text=white,line width=0mm]
\tikzstyle{c0}=[shape=circle, minimum size=4pt, fill=white]
\tikzstyle{c1}=[shape=rectangle, minimum size=7pt, fill=red]
\tikzstyle{c2}=[shape=diamond, minimum size=10pt, fill=blue]
\newcommand{\cdiamond}{\tikz \node[vertex, c2, minimum size=.65em] at (0,0) {};}
\newcommand{\csquare}{\tikz \node[vertex, c1, minimum size=.5em] at (0,0) {};}
\newcommand{\C}{\mathcal{C}}
\renewcommand{\L}{\mathcal{L}}
\newcommand{\E}{\mathbb{E}}
\newcommand{\size}[1]{\lvert #1 \rvert}
\def\lam{\lambda}
\def\Lam{\Lambda}
\newcommand{\PC}[1][C]{P_{#1}}
\newcommand{\PCi}[2][C]{P_{#1}^{(#2)}}
\newcommand{\HWR}{H_{\text{WR}}}
\newcommand{\Hind}{H_{\text{ind}}}
\newtheorem{theorem}{Theorem}
\newtheorem{claim}[theorem]{Claim}
\newtheorem{lemma}[theorem]{Lemma}
\newtheorem{cor}[theorem]{Corollary}
\newtheorem{conj}[theorem]{Conjecture}
\title[On the Widom--Rowlinson occupancy fraction]{On the Widom--Rowlinson occupancy fraction in regular graphs}
\author{Emma Cohen, Will Perkins, Prasad Tetali}
\address{School of Mathematics, Georgia Institute of Technology}
\email{ecohen32@gatech.edu}
\address{School of Mathematics, University of Birmingham}
\email{math@willperkins.org}
\address{School of Mathematics, Georgia Institute of Technology}
\email{tetali@math.gatech.edu}
\thanks{Research of the first and the last authors is supported in part by the NSF grant DMS-1407657.}
\begin{document}

\begin{abstract}
We consider the Widom--Rowlinson model of two types of interacting particles on $d$-regular graphs. We prove a tight upper bound on the occupancy fraction, the expected fraction of vertices occupied by a particle under a random configuration from the model.  The upper bound is achieved uniquely by unions of complete graphs on $d+1$ vertices, $K_{d+1}$'s.  As a corollary we find that $K_{d+1}$ also maximises the normalised partition function of the Widom--Rowlinson model over the class of $d$-regular graphs.  A special case of this shows that the normalised number of homomorphisms from any $d$-regular graph $G$ to the graph $\HWR$, a path on three vertices with a loop on each vertex, is maximised by $K_{d+1}$. This proves a conjecture of Galvin. 
\end{abstract}
\subjclass[2010]{05C60, 05C35, 82B20}

\maketitle

\section{The Widom--Rowlinson Model}

A \emph{Widom--Rowlinson assignment} or \emph{configuration} on a graph $G$ is a map $\chi: V(G)\to \{0,1,2\}$ so that $1$ and $2$ are not assigned to neighbouring vertices, or in other words, a graph homomorphism from $G$ to the graph $\HWR$ consisting of a path on $3$ vertices with a loop on each vertex (the middle vertex represents the label $0$).  Call the set of all such assignments $\Omega(G)$.   The \emph{Widom--Rowlinson model} on $G$ is a probability distribution over $\Omega(G)$ parameterised by $\lam \in (0, \infty)$, given by: 
\begin{align}
    {\mathbb P}[\chi] &= \frac{\lambda^{X_1(\chi) + X_2(\chi)}}{P_G(\lambda)},
\end{align}
where $X_i(\chi)$ is the number of vertices coloured $i$ under $\chi$, and 
\begin{align}
P_G(\lambda) &= \sum_{\chi \in \Omega(G)} \lambda^{X_1(\chi) + X_2(\chi)} 
\end{align}
 is the \textit{partition function}.  Evaluating $P_G(\lam)$ at $\lam=1$ counts the number of homomorphisms from $G$ to $\HWR$. We think of vertices assigned $1$ and $2$ as ``coloured'' and those assigned $0$ as ``uncoloured'' (see Figure \ref{fig:WR-config}). 

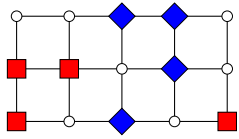
\begin{figure}[t]
\centering
\begin{tikzpicture}[scale=.7, baseline=.5cm]
    \foreach \x in {0,...,4}{
        \foreach \y in {0,1,2}{
            \node[vertex, c0] (v\x\y) at (\x,\y) {};
        }
        \draw (v\x0) to (v\x1) to (v\x2);
    }
    \draw (v00) to (v10) to (v20) to (v30) to (v40)
          (v01) to (v11) to (v21) to (v31) to (v41)
          (v02) to (v12) to (v22) to (v32) to (v42);
    \foreach \c in {00, 01, 11, 40}{
        \node[vertex, c1] at (v\c) {};
    }
    \foreach \c in {20, 22, 31, 32}{
        \node[vertex, c2] at (v\c) {};
    }
\end{tikzpicture}
\caption{A configuration for the Widom--Rowlinson model on a grid. Vertices mapping to 1 and 2 are shown as squares and diamonds, respectively (corresponding to Figure \ref{fig:HWR}).}
\label{fig:WR-config}
\end{figure}

The Widom--Rowlinson model was introduced by Widom and Rowlinson in 1970 \cite{widom1970new}, as a model of two types of interacting particles with a hard-core exclusion between particles of different types: colour $1$ and $2$ represent particles of each type and colour $0$ represents an unoccupied site.  The model has been studied both on lattices \cite{lebowitz1971phase} and in the continuum \cite{ruelle1971existence,chayes1995analysis} and is known to exhibit a phase transition in both cases. 

The Widom--Rowlinson model is one case of a general random model: that of choosing a random homomorphism from a large graph $G$ to a fixed graph $H$.  In the Widom--Rowlinson case, we take $H=\HWR$. Another notable case is $\Hind$, an edge between two vertices, one of which has a loop (see Figure \ref{fig:HWR}).  Homomorphisms from $G$ to $\Hind$ are exactly the independent sets of $G$, and the partition function of the hard-core model is the sum of $\lam^{|I|}$ over all independent sets $I$.   An overview of the connections between statistical physics models with hard constraints, graph homomorphisms, and combinatorics can be found in \cite{winkler2002bethe}.    

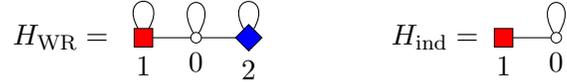
\begin{figure}
\centering
\[\HWR = 
\begin{tikzpicture}[scale=.7, baseline=-1mm]
  \node[vertex, c1, label={below:$1$}] (v1) at (-1,0) {};
  \node[vertex, c0, label={below:$0$}] (v0) at (0,0) {};
  \node[vertex, c2, label={below:$2$}] (v2) at (1,0) {};
  \draw (v1) .. controls +(60:1) and +(120:1) .. (v1) to
        (v0) .. controls +(60:1) and +(120:1) .. (v0) to
        (v2) .. controls +(60:1) and +(120:1) .. (v2);
\end{tikzpicture}
\qquad
\qquad
\Hind = 
\begin{tikzpicture}[scale=.7, baseline=-1mm]
  \node[vertex, c1, label={below:$1$}] (v1) at (-1,0) {};
  \node[vertex, c0, label={below:$0$}] (v0) at (0,0) {};
  \draw (v1) to (v0) .. controls +(60:1) and +(120:1) .. (v0);
\end{tikzpicture}
\]
\caption{The target graphs for the Widom--Rowlinson model and the hard-core model.}
\label{fig:HWR}
\end{figure}

For every such model, there is an associated extremal problem.  Denote by $\mathrm{hom}(G,H)$ the number of homomorphisms from $G$ to $H$. Then we can ask which graph $G$ from  a class of graphs $\mathcal G$ maximises $\mathrm{hom}(G,H)$, or if we wish to compare graphs on different numbers of vertices, ask which graph maximises the scaled quantity $\mathrm{hom}(G,H)^{1/|V(G)|}$.  

 Kahn  \cite{kahn2001entropy} proved that for any $d$-regular, bipartite graph $G$,   
 \begin{align}
 \label{kahnThm}
  \mathrm{hom}(G,\Hind) &\le \mathrm{hom}(K_{d,d},\Hind)^{|V(G)|/2d} \,, 
  \end{align}
  where $K_{d,d}$ is the complete $d$-regular bipartite graph. Equality holds in \eqref{kahnThm} if $G$ is $K_{d,d}$ or a union of $K_{d,d}$'s.  In other words, unions of $K_{d,d}$'s maximise the total number of independent sets over all $d$-regular, bipartite graphs on a fixed number of vertices.
   
   In a broad generalisation of Kahn's result, Galvin and Tetali \cite{galvin2004weighted} showed that in fact, \eqref{kahnThm} holds for all $d$-regular, bipartite $G$ and \textit{all} target graphs $H$ (including, for example, $\HWR$).  And using a cloning construction and a limiting argument, they showed that in fact the partition function of such models (a weighted count of homomorphisms) is maximised by $K_{d,d}$; for example, for a $d$-regular, bipartite $G$,
 \begin{equation}
 \label{galTetthm}
  P_G(\lam) \le P_{K_{d,d}}(\lam)^{|V(G)|/2d},
  \end{equation}
 where $P_G(\lam)$ is the Widom--Rowlinson partition function defined above or the independence polynomial of a graph.  Note that the case $\lam =1$ is the counting result.  
 
 There is no such sweeping statement for the class of all $d$-regular graphs with the bipartiteness restriction removed.   In \cite{zhao2010number} and \cite{zhao2011bipartite}, Zhao showed that the bipartiteness restriction on $G$ in \eqref{kahnThm} and \eqref{galTetthm} can be removed for some class of graphs $H$, including $\Hind$.  But such an extension is not possible for all graphs $H$; for example, $K_{d+1}$ has more homomorphisms to $\HWR$ than does  $K_{d,d}$ (after normalising for the different numbers of vertices).  In fact Galvin conjectured the following:

\begin{conj}[Galvin \cite{galvin2013maximizing,galvin2014three}]
\label{conj:glavin}
Let $G$ be a any $d$-regular graph.  Then
\[ \mathrm{hom}(G,\HWR) \le \mathrm{hom}(K_{d+1},\HWR)^{|V(G)|/(d+1)} \, .\]
\end{conj}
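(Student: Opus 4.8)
The plan is to follow the \emph{occupancy method}: prove the pointwise (in $\lam$) inequality
\[
\alpha_G(\lam)\;:=\;\frac{1}{\size{V(G)}}\,\E_{\chi}\bigl[X_1(\chi)+X_2(\chi)\bigr]\;\le\;\alpha_{K_{d+1}}(\lam)
\]
for every $d$-regular $G$, where $\chi$ is drawn from the Widom--Rowlinson model, and then integrate. The integration step is routine: differentiating $\lam P_G'(\lam)=\sum_\chi(X_1+X_2)\lam^{X_1+X_2}=P_G(\lam)\,\E[X_1+X_2]$ gives $\frac{d}{d\lam}\bigl(\tfrac1n\log P_G(\lam)\bigr)=\alpha_G(\lam)/\lam$ with $n=\size{V(G)}$; since $P_G(0)=1$ and $\alpha_G(\lam)/\lam\to 2$ as $\lam\to 0$, integrating the occupancy bound from $0$ to $\lam$ yields $\tfrac1n\log P_G(\lam)\le\tfrac{1}{d+1}\log P_{K_{d+1}}(\lam)$, i.e.\ $P_G(\lam)\le P_{K_{d+1}}(\lam)^{n/(d+1)}$; taking $\lam=1$ and using $P_G(1)=\mathrm{hom}(G,\HWR)$ gives Conjecture~\ref{conj:glavin}. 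Here a direct count gives $P_{K_{d+1}}(\lam)=2(1+\lam)^{d+1}-1$ and $\alpha_{K_{d+1}}(\lam)=\frac{2\lam(1+\lam)^d}{2(1+\lam)^{d+1}-1}$.

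To prove the occupancy bound I would exploit the spatial Markov property. Pick a uniformly random vertex $v$ and reveal $\chi$ on $V\setminus N[v]$, where $N[v]=\{v\}\cup N(v)$. Conditioned on this, the law of $\chi$ on $N[v]$ is the Widom--Rowlinson model on the induced graph $G[N[v]]$, together with an external field on each neighbour $u$ of $v$ that records which of the colours $1,2$ remain available at $u$ (colour $c$ is forbidden at $u$ precisely when some already-revealed neighbour of $u$ outside $N[v]$ carries the opposite colour), while $v$ itself carries no field. Because $v$ is adjacent to every vertex of $N(v)$, the conditional probability that $v$ is coloured is an explicit rational function of $\lam$ depending only on the graph $G[N[v]]$ and on the pattern of available colours on $N(v)$; and $K_{d+1}$ corresponds exactly to the extremal local picture, namely $N[v]=V$, $G[N[v]]=K_{d+1}$, and no field at all, which is globally consistent precisely for disjoint unions of $K_{d+1}$'s.

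The main obstacle is turning ``$K_{d+1}$ is the best local picture'' into a genuine proof, and this is where $d$-regularity must really be used: one cannot at once make $G[N[v]]$ complete and kill all external fields, since every edge from some $u\in N(v)$ to outside $N[v]$ creates a potential field. I would encode the problem as an optimisation over the (averaged over $v$) probabilities of the relevant local events, subject to the natural normalisation; to the double-counting identity that the expected number of coloured neighbours of a uniformly random vertex equals $d\,\alpha_G(\lam)$; to further linear relations obtained by also revealing $\chi$ outside the closed neighbourhood of a uniformly random \emph{edge} (which controls the joint behaviour and the field correlations at the two endpoints); and to the constraint that each conditional probability has the form forced by the local model. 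The claim to be established is that the value of this program equals $\alpha_{K_{d+1}}(\lam)$, attained only at the $K_{d+1}$ point; I expect this to require a convexity argument showing that, for fixed degree-type statistics, ``coordinating'' the colours available on $N(v)$ (as a clique does) can only raise $\mathbb P[v\text{ coloured}]$. Tracking the equality cases through the occupancy bound and the integration then gives uniqueness of $K_{d+1}$ and hence the stronger statements in the abstract.
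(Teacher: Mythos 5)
Your overall strategy is exactly the one the paper uses: reduce the counting statement to the pointwise occupancy bound $\alpha_G(\lam)\le\alpha_{K_{d+1}}(\lam)$, integrate the logarithmic derivative of $P_G$, and set $\lam=1$. The integration step and the formulas for $P_{K_{d+1}}$ and $\alpha_{K_{d+1}}$ are correct, and your setup for the occupancy bound --- reveal $\chi$ outside $N[v]$, use the spatial Markov property to reduce to local configurations consisting of the graph $G[N(v)]$ together with lists of available colours, and optimise over distributions on such configurations subject to the double-counting identity $\E[\alpha^v]=\E[\alpha^u]$ --- is precisely the paper's linear programming relaxation. (The extra edge-based constraints you mention are not needed; the paper uses only the single vertex--neighbour identity.)

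However, there is a genuine gap: the core of the argument, namely the proof that the value of this constrained optimisation is $\alpha_{K_{d+1}}(\lam)$, is only announced, not carried out. Worse, the route you sketch for it --- a convexity argument showing that ``coordinating'' the available colours on $N(v)$ can only raise ${\mathbb P}[v\text{ coloured}]$ --- cannot work as stated, because $C_{K_{d+1}}$ does \emph{not} maximise $\alpha^v(C)$ over local configurations. For the configuration $C_0$ in which every neighbour's list is empty one computes $\alpha^v(C_0)=\tfrac{2\lam}{1+2\lam}$, which exceeds $\alpha_{K_{d+1}}(\lam)=\tfrac{2\lam(1+\lam)^d}{2(1+\lam)^{d+1}-1}$ for every $\lam>0$ (their difference in cross-multiplied form is $(1+\lam)^d-1>0$). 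The constraint $\E[\alpha^v]=\E[\alpha^u]$ is therefore not a technicality but the essential ingredient: the paper passes to the LP dual, chooses the multiplier $\Lam_c$ so that the dual constraint is tight at $C_0$, and then must verify that the resulting convex combination $\Lam_c\,\alpha^u(C)+(1-\Lam_c)\,\alpha^v(C)$ is at most $\alpha_{K_{d+1}}(\lam)$ for \emph{every} configuration $C$. That verification is the real work; the paper does it by splitting the quantity $\bigl((\PCi{0})'+\lam(\PCi{12})'\bigr)/\bigl(2\PCi{0}-\PCi{12}\bigr)$ into two terms and bounding each by $d(1+\lam)^{d-1}/\bigl((1+\lam)^d-1\bigr)$, using a mediant inequality for one term and a conditional-expectation decomposition over the set $\chi^{-1}(2)$ for the other. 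None of this appears in your proposal, so the central inequality remains unproven.
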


The more general Conjecture 1.1 of \cite{galvin2013maximizing} that the maximising $G$ for any $H$ is either $K_{d,d}$ or $K_{d+1}$ has been disproved by Sernau \cite{sernau2015graph}.

The above theorems of Kahn and Galvin and Tetali are based on the \textit{entropy method} (see \cite{radhakrishnan20036} and \cite{galvin2014three} for a survey), but in this context bipartiteness seems essential for the effectiveness of the method.  We will approach the problem differently, using the \textit{occupancy method} of \cite{davies2015independent}.

We first define the \emph{occupancy fraction} $\alpha_G(\lam)$ to be the expected fraction of vertices which receive a (nonzero) colour in the Widom--Rowlinson model:
\[ \alpha_G(\lam) = \frac{\E [X_1 + X_2]}{|V(G)|}  \, , \]
where $X_i$ is the number of vertices coloured $i$ by the random assignment $\chi$.  A calculation shows that $\alpha_G(\lam)$ is in fact the scaled logarithmic derivative of the partition function:
\begin{equation}
\label{eqalphadef}
 \alpha_G(\lam) = \frac{\lam}{|V(G)|} \cdot \frac{ P_G^\prime(\lam)}{P_G(\lam)} = \frac{\lam \cdot ( \log P_G(\lam))^\prime}{|V(G)|}  \, .
 \end{equation}

Our main result is that for any $\lam$,  $\alpha_G(\lam)$  is maximised over all $d$-regular graphs $G$ by $K_{d+1}$.

\begin{theorem} \label{thm:main}
    Let $G$ be any $d$-regular graph and $\lam > 0$. Then 
    \[ \alpha_G(\lam) \le \alpha_{K_{d+1}}(\lam) \]
    with equality if and only if $G$ is a union of $K_{d+1}$'s. 
\end{theorem}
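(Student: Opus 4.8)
The strategy follows the "occupancy method" of Davies–Jenssen–Perkins–Roberts. I want to bound the local occupancy fraction at a typical vertex $v$ from above, using only the constraint that $v$ has degree $d$ and that the configuration restricted to $N(v)$ is a legal Widom–Rowlinson configuration on... something. The key tool is: fix a vertex $v$, reveal the coloring $\chi$ on everything except $v$ and its neighbors... actually the cleanest version: reveal $\chi$ on $V(G)\setminus (\{v\}\cup N(v))$, and condition on this "boundary".

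Let me think about what I'd actually do.

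---

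The plan is to use the occupancy method: bound $\alpha_G(\lambda)$ by analyzing the conditional distribution at a single vertex.

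Fix a vertex $v$ of $G$. Sample $\chi$ from the Widom–Rowlinson distribution, then reveal $\chi$ restricted to $V(G) \setminus (\{v\} \cup N(v))$ — call this the external configuration, and condition on it. Given this, the colors available to the neighbors and to $v$ are constrained: each neighbor $u \in N(v)$ is "blocked" from color $1$ or from color $2$ (or neither, or — if it has external neighbors of both types — it's forced to $0$) depending on its own external neighborhood. So the conditional law of $(\chi(v), \chi|_{N(v)})$ is a Widom–Rowlinson-like measure on the "star plus neighborhood graph" with certain vertices pre-blocked.

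The heart of the argument is then a *local* inequality: I want to show that for the star-type configuration, writing $\E[\mathbf{1}[\chi(v)\ne 0] \mid \text{external}]$ and relating it to $\E[\#\{u \in N(v): \chi(u)\ne 0\} \mid \text{external}]$, there is a pointwise bound that, after averaging over $v$ and using $d$-regularity (so $\sum_v \#\{u\in N(v):\chi(u)\ne 0\} = d\sum_v \mathbf{1}[\chi(v)\ne 0]$, i.e., the expected number of occupied vertices counted from "inside" equals that counted from "outside a neighbor"), forces $\alpha_G(\lambda) \le \alpha_{K_{d+1}}(\lambda)$. This consistency equation — that $\E[X_1+X_2]$ can be computed either as a sum over vertices of $\mathbb{P}[v\text{ occupied}]$ or, via double counting of edges, via the neighbor statistics — is exactly what converts a one-vertex local bound into a global bound.

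Concretely, I would set up a linear-programming-type relaxation: the conditional occupancy behavior at $v$ depends only on how many neighbors are blocked-from-$1$, blocked-from-$2$, free, or forced-to-$0$; these four counts sum to $d$. Over all such "boundary profiles," one writes $\alpha$ as a convex combination and the neighbor-occupancy as another, and one must show the extremal tradeoff point is exactly the one realized by $K_{d+1}$ (where the neighborhood is a clique $K_d$, every neighbor is free, and the whole thing is just the 2-colored WR measure on $K_{d+1}$). So the main steps are: (1) the conditioning/revealing setup and identification of the conditional measure; (2) writing down the one-vertex partition function $P^{(v)}$ over the star as an explicit function of the boundary profile $(\lambda, \text{counts})$, and extracting formulas for the two occupancy-type quantities; (3) proving the local inequality — that among all admissible profiles, the ratio is optimized by the all-free profile with $d$ neighbors, matching $K_{d+1}$; (4) averaging over $v$ using the $d$-regular double-counting identity to conclude; (5) the equality analysis — tracing back when the local inequality is tight at every vertex simultaneously, which should force every neighborhood to be a clique and hence $G$ to be a disjoint union of $K_{d+1}$'s.

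**Main obstacle.** The hard part will be step (3), the local inequality. Unlike the hard-core (independent set) case, the Widom–Rowlinson star has a richer structure — a neighbor can be blocked from color $1$, from color $2$, forced to $0$, or free — and the neighbors interact with each other only through $v$ (they form an independent set as far as the *induced* constraints go, but in $K_{d+1}$ they are mutually adjacent, which *helps* the occupancy by a symmetry-breaking effect). So the relaxation "neighbors form an independent set" is lossy and one cannot just optimize over the star; one genuinely needs that extra edges among neighbors push the statistic in the right direction. I expect this requires either a clever direct computation exploiting the path-with-loops structure of $\HWR$ (e.g. a symmetrization/swapping argument between colors $1$ and $2$), or recognizing that the right comparison object already incorporates the clique, and carefully checking a two-variable inequality is monotone/convex in the appropriate direction. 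Verifying that this inequality is strict except at the $K_{d+1}$ profile — and that it remains strict after the global averaging unless *every* vertex is extremal — is where the bulk of the technical work lies.
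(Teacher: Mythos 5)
Your plan is the paper's plan: run the experiment of drawing $\chi$ from the model and a uniform vertex $v$, record the local configuration $C=(H,\mathcal L)$ consisting of the induced graph on $N(v)$ together with the colour lists imposed by the revealed external boundary, use the $d$-regularity identity $\E[\alpha^v(C)]=\E[\alpha^u(C)]$ as the single linear constraint, and solve the resulting LP relaxation, with $K_{d+1}$ as the claimed optimum. But the proposal stops exactly where the proof begins. The entire mathematical content of the theorem is the dual feasibility check --- exhibiting a multiplier $\Lambda_c$ such that $\Lambda_c\,\alpha^u(C)+(1-\Lambda_c)\,\alpha^v(C)\le\alpha_{K_{d+1}}(\lambda)$ for \emph{every} configuration $C$ --- and you explicitly defer this step (``I expect this requires either a clever direct computation \dots or \dots carefully checking a two-variable inequality''). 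In the paper, $\Lambda_c$ is found by forcing equality at the all-empty-lists configuration $C_0$, which turns the dual constraint into $\left((\PCi{0})'+\lambda(\PCi{12})'\right)/\left(2\PCi{0}-\PCi{12}\right)\le d(1+\lambda)^d/((1+\lambda)^d-1)$; this is then split into two halves, one proved via the lower bound $\PCi{0}\ge\PCi{1}+\PCi{2}-1$ (keeping only monochromatic colourings) followed by a mediant inequality and a monotonicity check, the other by rewriting $\lambda(\PCi{0})'/(2\PCi{0}-\PCi{12})$ as $(\E[X_1]+\E[X_2])/(\mathbb P[X_1>0]+\mathbb P[X_2>0])$ and decomposing $\E[X_1\mid X_1>0]$ over the possible sets $\chi^{-1}(2)$. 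None of this is routine, and without it you do not have a proof.

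Two further concrete issues. First, your step (3) asserts that the local behaviour ``depends only on'' the four counts of blocked/free/forced neighbours; that is false for the neighbour statistic $\alpha^u(C)$, which depends on the edges of $H$ (adjacent neighbours cannot receive opposite colours), so the dual constraint must be verified for an arbitrary graph $H$ with arbitrary lists, not merely for a profile of counts --- your ``main obstacle'' paragraph senses this but does not resolve it. Second, your equality analysis is too optimistic: the dual constraint is tight not only at $C_{K_{d+1}}$ but also at $C_0$ (all lists empty) and at the monochromatic configurations $C_1$ (all lists equal to $\{i\}$), so local tightness alone does not force neighbourhoods to be cliques. One must additionally invoke the feasibility constraint $\E[\alpha^v]=\E[\alpha^u]$ together with the strict inequalities $\alpha^u(C_0)<\alpha^v(C_0)$ and $\alpha^u(C_1)<\alpha^v(C_1)$ to conclude that an optimal distribution puts all its mass on $C_{K_{d+1}}$, and hence that $G$ is a union of $K_{d+1}$'s.
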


We will prove this by introducing local constraints on random configurations induced by the Widom--Rowlinson model on a $d$-regular graph $G$, then solving a linear programming relaxation of the optimisation problem over all $d$-regular graphs.

Theorem~\ref{thm:main} implies maximality of the normalised partition function:
\begin{cor} \label{cor:partition}
    Let $G$ be a $d$-regular graph and $\lam > 0$. Then 
    \[ \frac{1}{|V(G)|} \log P_G(\lam) \le \frac{1}{d+1} \log P_{K_{d+1}}(\lam) \,,  \]
    or equivalently,
    \[ P_G(\lam) \le P_{K_{d+1}}(\lam)^{|V(G)|/(d+1)} \, ,\]
    with equality if and only if $G$ is a union of $K_{d+1}$'s. 
\end{cor}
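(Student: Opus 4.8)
The plan is to recover the normalised log-partition function from the occupancy fraction by integrating in $\lam$, and then apply Theorem~\ref{thm:main} pointwise. Rewriting \eqref{eqalphadef}, we have on $(0,\infty)$
\[ \left( \frac{1}{|V(G)|}\log P_G(\lam) \right)' = \frac{\alpha_G(\lam)}{\lam}. \]
To fix the constant of integration I would examine $\lam \to 0^+$: since $P_G$ is a polynomial in $\lam$ with constant term $1$ (the all-$0$ configuration) and coefficient of $\lam$ equal to $2|V(G)|$ (one coloured vertex, two choices of colour for it), we get $P_G(0)=1$ and $\alpha_G(\lam) = 2\lam + O(\lam^2)$, so $\alpha_G(\lam)/\lam$ extends continuously to $\lam=0$ with value $2$. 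Hence $\frac{1}{|V(G)|}\log P_G(\lam) = \int_0^\lam \frac{\alpha_G(t)}{t}\,dt$.

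Given this identity the inequality is immediate: Theorem~\ref{thm:main} gives $\alpha_G(t) \le \alpha_{K_{d+1}}(t)$ for every $t>0$, so
\[ \frac{1}{|V(G)|}\log P_G(\lam) = \int_0^\lam \frac{\alpha_G(t)}{t}\,dt \le \int_0^\lam \frac{\alpha_{K_{d+1}}(t)}{t}\,dt = \frac{1}{d+1}\log P_{K_{d+1}}(\lam), \]
and exponentiating (a monotone operation) yields the equivalent multiplicative form $P_G(\lam) \le P_{K_{d+1}}(\lam)^{|V(G)|/(d+1)}$.

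For the equality clause, note that $\alpha_G$ is a ratio of polynomials whose denominator $P_G$ has positive coefficients, hence has no zeros on $(0,\infty)$ and $\alpha_G$ is continuous (indeed real-analytic) there; by Theorem~\ref{thm:main} the integrand $\bigl(\alpha_{K_{d+1}}(t) - \alpha_G(t)\bigr)/t$ is nonnegative on $(0,\infty)$. If equality holds in the corollary at some $\lam$, this nonnegative continuous function has integral $0$ over $(0,\lam]$ and therefore vanishes identically there, forcing $\alpha_G(t) = \alpha_{K_{d+1}}(t)$ for all $t \in (0,\lam]$; the equality characterisation in Theorem~\ref{thm:main} (applied, say, at $t = \lam$) then says $G$ is a union of $K_{d+1}$'s. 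The converse is a direct computation: if $G$ is a disjoint union of $k$ copies of $K_{d+1}$ then $P_G(\lam) = P_{K_{d+1}}(\lam)^k$ and $|V(G)| = k(d+1)$, so both sides agree. I do not anticipate a genuine obstacle here; the only steps requiring a little care are the convergence of the integral at $\lam = 0$, handled by the Taylor expansion of $P_G$, and the continuity input used to upgrade ``integral zero'' to ``identically zero'' in the equality case.
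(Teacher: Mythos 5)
Your proposal is correct and follows essentially the same route as the paper: rewrite \eqref{eqalphadef} as $\bigl(\tfrac{1}{|V(G)|}\log P_G\bigr)'=\alpha_G(\lam)/\lam$, integrate from $0$ to $\lam$ using $P_G(0)=1$, and apply Theorem~\ref{thm:main} pointwise under the integral. Your treatment of the behaviour at $\lam=0$ and of the equality case (via continuity of the nonnegative integrand) is slightly more explicit than the paper's, but the argument is the same.
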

The quantity $ \frac{1}{|V(G)|} \log P_G(\lam) $ is known in statistical physics as the \textit{free energy per unit volume}.  
Corollary~\ref{cor:partition} follows from Theorem~\ref{thm:main} as follows:  $\frac{1}{|V(G)|} \log P_G(0) =0$ for any $G$, and so
\begin{align*}
    \frac{1}{|V(G)|} \log P_G(\lam) 
    &= \frac{1}{|V(G)|} \int_0 ^\lam  \left(  \log P_G(t) \right )^\prime \, dt \\
    &\le \frac{1}{d+1} \int_0^\lam   \left(  \log P_{K_{d+1}}(t) \right )^\prime \, dt = \frac{1}{d+1} \log P_{K_{d+1}}(\lam) 
\end{align*}
where the inequality follows from Theorem~\ref{thm:main} and \eqref{eqalphadef}.  Exponentiating both sides gives Corollary~\ref{cor:partition}.

By taking $\lam=1$ in Corollary~\ref{cor:partition}, we get the counting result:
\begin{cor}
For all $d$-regular $G$,
\[ \mathrm{hom}(G,\HWR) \le \mathrm{hom}(K_{d+1},\HWR)^{|V(G)|/(d+1)} \]
with equality if and only if $G$ is a union of $K_{d+1}$'s.
\end{cor}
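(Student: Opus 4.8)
The plan is to run the \emph{occupancy method} of \cite{davies2015independent}: extract from the Widom--Rowlinson distribution on a $d$-regular $G$ a short list of linear identities satisfied by $\alpha_G(\lam)$, pass to a linear programming relaxation over all $d$-regular graphs, and show its optimum equals $\alpha_{K_{d+1}}(\lam)$ and is attained only by (the local structure of) $K_{d+1}$. Concretely, fix $d$-regular $G$ and $\lam>0$, sample $\chi$ from the model, pick a uniformly random vertex $v$, and condition on the restriction of $\chi$ to $V(G)\setminus(\{v\}\cup N(v))$. The conditional law of $\chi$ on $\{v\}\cup N(v)$ depends on that boundary only through a finite amount of data which I will call the \emph{local type} $\tau=\tau(v,\chi)$: the isomorphism type of the induced graph $G[N(v)]$ (recording which pairs of neighbours are adjacent, since this affects the constraint), together with, for each $u\in N(v)$, a flag recording which of the colours $1,2$ — if either — is forbidden at $u$ by a coloured vertex outside $\{v\}\cup N(v)$. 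For a fixed $\tau$ the conditional model on $\{v\}\cup N(v)$ is an explicit finite Widom--Rowlinson computation; write $p(\tau)=\mathbb P[\chi_v\neq 0\mid\tau]$ and $q(\tau)=\mathbb E\big[\,|\{u\in N(v):\chi_u\neq 0\}|\ \big|\ \tau\,\big]$.

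Averaging over $v$ and $\chi$ gives $\alpha_G(\lam)=\mathbb E_\tau[p(\tau)]$ directly, and, by counting in two ways the ordered pairs $(v,u)$ with $u\sim v$ and $\chi_u\neq 0$ and using $d$-regularity, $d\,\alpha_G(\lam)=\mathbb E_\tau[q(\tau)]$; hence $\mathbb E_\tau[q(\tau)-d\,p(\tau)]=0$. Now relax: forget that $\tau$ arises from an actual graph and retain only that $\tau$ follows \emph{some} probability measure $\mathcal D$ on the (for fixed $d$, finite) set of local types with $\mathbb E_{\tau\sim\mathcal D}[q(\tau)-d\,p(\tau)]=0$. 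Then $\alpha_G(\lam)\le\max_{\mathcal D}\mathbb E_{\tau\sim\mathcal D}[p(\tau)]$ over such $\mathcal D$, a linear program. The ``clique type'' $\tau^\star$ with $G[N(v)]\cong K_d$ and no forbidden colours — the local type of \emph{every} vertex of $K_{d+1}$ — is feasible as a point mass (by vertex-transitivity of $K_{d+1}$ one has $q(\tau^\star)=d\,p(\tau^\star)$), with objective value $p(\tau^\star)=\alpha_{K_{d+1}}(\lam)$. So it remains to show $\tau^\star$ is optimal.

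Since the LP has a single equality constraint beyond $\mathcal D$ being a probability measure, an optimal $\mathcal D$ is supported on at most two local types; equivalently, by LP duality it suffices to exhibit a multiplier $\beta\in\mathbb R$ with
\[(1+\beta d)\,p(\tau)-\beta\,q(\tau)\ \le\ \alpha_{K_{d+1}}(\lam)\qquad\text{for every local type }\tau,\]
with equality at $\tau=\tau^\star$ (the value of $\beta$ being forced by tangency at the clique, computed from an explicit one-parameter deformation of $\tau^\star$, e.g.\ deleting one edge of $K_d$ or imposing one boundary constraint). Verifying this one-parameter family of inequalities uniformly over \emph{all} local types is the main obstacle: the set of types (all graphs on $d$ vertices, each vertex carrying a colour-flag) is large, so I expect to need a reduction — an exchange/monotonicity argument showing that adding an edge among the neighbours, or relaxing a boundary constraint, moves the point $(p(\tau),q(\tau))$ in a controlled direction — collapsing the worst case to a one- or two-parameter family on which the inequality becomes an explicit single-variable estimate. (Should one equality constraint not suffice, the same scheme works with additional local identities, hence additional multipliers, obtained by peeling the neighbours of $v$ off one at a time and tracking the resulting conditional occupancy probabilities.)

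Finally, for the equality characterisation one checks that the displayed inequality is \emph{strict} for every $\tau\neq\tau^\star$. Then $\alpha_G(\lam)=\alpha_{K_{d+1}}(\lam)$ forces the optimal $\mathcal D$ to be the point mass at $\tau^\star$, i.e.\ $G[N(v)]\cong K_d$ for every vertex $v$; since each $v$ together with its neighbours then spans a $K_{d+1}$ and $G$ is $d$-regular, this $K_{d+1}$ is a connected component, so $G$ is a disjoint union of $K_{d+1}$'s. Corollary~\ref{cor:partition} and the counting corollary then follow by the integration argument already given in the excerpt.
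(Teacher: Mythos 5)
Your outline follows exactly the route the paper takes: express $\alpha_G(\lam)$ as an expectation of local quantities over a random ``configuration with boundary conditions,'' relax to all probability distributions on configurations satisfying the single consistency constraint $\E[\alpha^v(C)]=\E[\alpha^u(C)]$, certify optimality of the clique configuration by a one-multiplier dual feasibility check, and then integrate the occupancy fraction (at $\lam=1$) to get the counting statement. The problem is that the crux of the argument --- verifying the dual inequality for \emph{every} local type --- is precisely the point at which you stop and say you ``expect to need a reduction.'' That verification is the entire technical content of the theorem, and the reduction you anticipate (an exchange argument tracking how adding an edge inside $N(v)$ or tightening a boundary flag moves $(p(\tau),q(\tau))$) is not what makes the proof work and is not obviously available. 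The paper instead fixes the multiplier by forcing equality at the all-empty-lists configuration $C_0$ (not by a deformation at the clique, where the constraint is automatically tight for every multiplier since $\alpha^v=\alpha^u$ there), rewrites the dual constraint as a bound on $\bigl((\PCi{0})'+\lam(\PCi{12})'\bigr)/\bigl(2\PCi{0}-\PCi{12}\bigr)$, and splits it into two claims: one handled via $\PCi{0}\ge \PCi{1}+\PCi{2}-1$, a mediant inequality, and the monotonicity in $a$ of $a(1+\lam)^{a-1}/((1+\lam)^a-1)$; the other by interpreting $\lam(\PCi{0})'/(2\PCi{0}-\PCi{12})$ as $(\E[X_1]+\E[X_2])/(\Pr[X_1>0]+\Pr[X_2>0])$ and bounding $\E[X_1\mid X_1>0]$ by conditioning on the set of vertices coloured $2$. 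Notably, no control of the edge structure of $H$ beyond these partition-function identities is needed, so the worst-case analysis over all graphs on $d$ vertices that worries you never arises.

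A second, concrete error: your uniqueness plan --- show the dual inequality is strict for every $\tau\neq\tau^\star$ --- cannot work, because it is false. With the correct multiplier the constraint is also tight at $C_0$ and at the configurations in which every list equals the same single colour (more generally, whenever the lists are identical and there are no dichromatic colourings). Uniqueness is instead recovered from primal feasibility: all of those other tight configurations satisfy $\alpha^u(C)<\alpha^v(C)$ strictly, so the constraint $\E[\alpha^v]=\E[\alpha^u]$ forces any optimal distribution to put all its mass on the clique configuration. Your final step (every neighbourhood inducing $K_d$ forces $G$ to be a disjoint union of $K_{d+1}$'s) and the integration yielding the $\lam=1$ counting corollary are fine.
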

This proves Conjecture \ref{conj:glavin}. 

\subsection*{Discussion and related work}
\label{sec:related}

The method we use is more probabilistic than the entropy method in the sense that Theorem~\ref{thm:main} gives information about an observable of the model; in some statistical physics models, the analogue of $\alpha_G(\lam)$ would be called the \emph{mean magnetisation}.  We also work directly in the statistical physics model, instead of counting homomorphisms.  

Davies, Jenssen, Perkins, and Roberts \cite{davies2015independent} applied the occupancy method to two central models in statistical physics: the hard-core model of a random independent set described above, and the monomer-dimer model of a randomly chosen matching from a graph $G$.  In both cases they showed that $K_{d,d}$ maximises the occupancy fraction over all $d$-regular graphs. In the case of independent sets this gives a strengthening of the results of Kahn, Galvin and Tetali, and Zhao, while for matchings, it was not known previously that unions of $K_{d,d}$ maximises the partition function or the total number of matchings.

The idea of calculating the log partition function by integrating a partial derivative is not new of course; see for example, the interpolation scheme of Dembo, Montanari, and Sun \cite{dembo2013factor} in the context of Gibbs distributions on locally tree-like graphs.  The method is powerful because it reduces the computation of a very global quantity, $P_G(\lam)$, to that of a locally estimable quantity, $\alpha_G(\lam)$. 

Some partial results towards the Widom--Rowlinson counting problem were obtained by Galvin \cite{galvin2013maximizing}, who showed that a graph with more homomorphisms than a union of $K_{d+1}$'s must be close in a specific sense to a union of  $K_{d+1}$'s.

\section{Proof of Theorem \ref{thm:main}}
\label{sec:proof}

\subsection{Preliminaries}
To prove Theorem \ref{thm:main}, we will use the following experiment: for a $d$-regular graph $G$, we first draw a random $\chi$ from the Widom--Rowlinson model, then select a vertex $v$ uniformly at random from $V(G)$. We then write our objective function, the occupancy fraction, in terms of local probabilities with respect to this experiment, and add constraints on the local probabilities that must hold for all $G$.  We then relax the optimisation problem to all distributions satisfying the local constraints, and optimise using linear programming.    

Fix $d$ and $\lambda$. Define a \emph{configuration with boundary conditions} $C =(H, \mathcal L)$ to be a graph $H$ on $d$ vertices with family of lists $\L = \{ L_u \}_{u \in H}$, where each $L_u \subseteq \{1,2\}$ is a set of allowed colours for the vertex $u$.  Here $H$ represents the neighbourhood structure of a vertex $v\in V(G)$ and the colour lists $L_u$ represent the colours permitted to neighbours of $v$, given an assignment $\chi$ on the vertices outside of $N(v)\cup \{v\}$. (See Figure \ref{fig:config_C}.)  Denote by $\C$ the set of all possible configurations with boundary conditions in any $d$-regular graph.

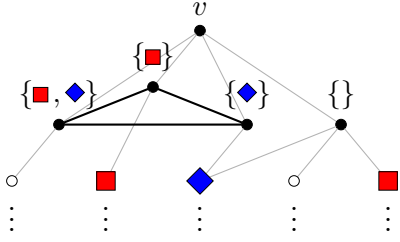
\begin{figure}
\centering
\begin{tikzpicture}[xscale=1.25]
  \node[vertex, label={above:$v$}] (v) at (0,0) {};
  \draw[draw=black!30] 
                (v) to (-1.5,-1.25) node[vertex, label={above:$\{\csquare\,,\cdiamond\}$}] (u1) {}
                (v) to (-.5, -0.75) node[vertex, label={above:$\{\csquare\}$}] (u2) {}
                (v) to (0.5, -1.25) node[vertex, label={above:$\{\cdiamond\}$}] (u3) {}
                (v) to (1.5, -1.25) node[vertex, label={above:$\{\}$}] (u4) {};
  \foreach \i in {1,...,5}{
    \coordinate[label=below:\vdots] (x\i) at ({\i-3},-2) {};}
  \draw[draw=black!30]
        (u4) to (x5)
        (u4) to (x4)
        (u4) to (x3)
        (u3) to (x3)
        (u2) to (x2)
        (u1) to (x1);
  \draw[thick] (u3) to (u1) to (u2) to (u3);
  \node[vertex, c0] at (x1) {};
  \node[vertex, c1] at (x2) {};
  \node[vertex, c2] at (x3) {};
  \node[vertex, c0] at (x4) {};
  \node[vertex, c1] at (x5) {};
\end{tikzpicture}
\caption{An example configuration with boundary conditions based on a colouring $\chi$. The graph $H$ consists of the four neighbours of $v$ along with the black edges, and the list $L_u$ is shown above each vertex $u$ of $H$. The colours assigned by $\chi$ to $v$ and its neighbours are immaterial and so are not shown.}
\label{fig:config_C}
\end{figure}

We now pick the assignment $\chi$ at random from the Widom--Rowlinson model on a fixed $d$-regular graph $G$, pick a vertex $v$ uniformly at random from $V(G)$, and consider the probability distribution induced on $\C$.

For example, if $G = K_{d+1}$ then with probability 1 the random configuration $C$ is $H = K_d$ with $L_u = \{1,2\}$ for all $u\in V(H)$. If $G = K_{d,d}$ then $H$ is always $d$ isolated vertices and the colour lists can be any (possibly empty) subset of $\{1,2\}$, but the lists must be the same for all $u\in V(H)$.

For a configuration $C = (H, \L)$, define 
\begin{align}
    \alpha_i^v(C) &= {\mathbb P}[\chi(v) = i \mid C]\\
    \alpha_i^u(C) &= \frac{1}{d} \sum_{u\in V(H)} {\mathbb P}[\chi(u) = i \mid C],
\end{align}
where the probability is over the Widom--Rowlinson model on $G$  given the boundary conditions $\mathcal L$. Note that the spatial Markov property of the model means that these probabilities are ``local'' in the sense that they can be computed knowing only $C$. Let $\alpha^v(C) = \alpha_1^v(C) + \alpha_2^v(C)$ and $\alpha^u(C) = \alpha_1^u(C) + \alpha_2^u(C)$. Then we have
\begin{align}
\label{eqAlpEq}
 \alpha_G(\lam) &= \frac{1}{|V(G)|} \sum_{v \in V(G)} {\mathbb P}[ \chi(v)\in \{1,2\}] =\E_C [ \alpha^v(C)] \\
 &=\frac{1}{d} \frac{1}{|V(G)|} \sum_{v \in V(G)} \sum_{u \sim v} {\mathbb P}[ \chi(u)\in \{1,2\}]  = \E_C[ \alpha^u(C)] \, ,
 \end{align}
where the expectations are over the probability distribution induced on $\C$ by our experiment of drawing $\chi$ from the model and $v$ uniformly at random from $V(G)$, and the last sum is over all neighbours of $v$ in $G$. Equality of the two expressions for $\alpha$ follows since sampling a uniform neighbour of a uniform vertex in a regular graph is equivalent to sampling a uniform vertex.  We will show that this expectation is maximised when the graph $G$ is $K_{d+1}$.

We can in fact write explicit formulae for $\alpha^v(C)$ and $\alpha^u(C)$. For a configuration $C = (H, \L)$, let $\PCi{0}(\lambda)$ be the total weight of colourings of $H$ satisfying the boundary conditions given by the lists $\L$ (corresponding to the partition function for the neighbourhood of $v$ conditioned on $\chi(v) = 0$). Also, write $\PCi{i}(\lambda)$ for the total weight of colourings of $H$ satisfying the boundary conditions and using only colour $i$ and $0$ (corresponding to the partition functions for the neighbourhood of $v$ conditioned on $\chi(v) = i$). Finally, let $\PCi{12}(\lambda) = \PCi{1}(\lambda) + \PCi{2}(\lambda)$ and let 
\begin{align}
\PC(\lambda) = \PCi{0}(\lambda) + \lambda \PCi{12}(\lambda)
\end{align}
be the partition function of $N(v)\cup\{v\}$ conditioned on the boundary conditions given by $C$. Note that if $\L$ has $a_1$ lists containing $1$ and $a_2$ lists containing $2$, then $\PCi{i}(\lambda) = (1+\lambda)^{a_i}$.

Now we can write
\begin{align}
\label{alphaFormulas}
    \alpha^v(C) &= \frac{\lambda \PCi{12}}{\PC} & \text{and}&&
    \alpha^u(C) &= \frac{\lambda\Big( (\PCi{0})^\prime + \lambda (\PCi{12})^\prime \Big)}{d\,\PC}\,,
\end{align}
where $P^\prime$ is the derivative of $P$ in $\lambda$. We will suppress the dependence of the partition functions on $\lam$ from now on. 

For $G = K_{d+1}$, we have
\begin{align}
    P_{K_{d+1}}&= 2(1 + \lambda)^{d+1} - 1\\
    \alpha_{K_{d+1}}(\lam) &=   \frac{2\lambda(1+\lambda)^d}{2(1+\lambda)^{d+1} - 1}\,.
\end{align}
If $G = K_{d+1}$ then the only possible configuration is  $C_{K_{d+1}}$, the complete neighbourhood $K_d$ with full boundary lists, so we also have $\alpha^u(K_d) = \alpha^v(K_d) = \alpha_{K_{d+1}}(\lam)$ (we can also compute these directly). Since this quantity will arise frequently, we will use the notation $\alpha_K = \alpha_{K_{d+1}}(\lam)$.

\subsection{A linear programming relaxation}

Now let $q: \C\to [0,1]$ denote a probability distribution over the set of all possible configurations. Then we set up the following optimisation problem over the variables $q(C),\, C\in \C$.
\begin{align}\label{primalProgram}
    \alpha^* &= \max \sum_{C\in \C} q(C) \alpha^v(C) \qquad \text{subject to}\\
    & \sum_{C\in \C} q(C) = 1 \\
    & \sum_{C\in \C} q(C) [\alpha^v(C) - \alpha^u(C)] = 0 \\
    & q(C) \geq 0 \,\,\,\, \forall C\in \C.
\end{align}

Note that this linear program is indeed a relaxation of our optimisation problem of maximising $\alpha_G(\lam)$ over all $d$-regular graphs: any such graph induces a probability distribution on $\C$, and as we have seen above in \eqref{eqAlpEq}, the constraint asserting the equality  $\E \alpha^v(C) = \E \alpha^u(C)$ must hold in all $d$-regular graphs. 

We will show that for any $\lam >0$ the unique optimal solution of this linear program is $q(C_{K_{d+1}}) =1$, where $C_{K_{d+1}}$ is the configuration induced by $K_{d+1}$: $H= K_d$ and $L_u =\{1,2\}$ for all $u\in H$. 

The dual of the above linear program is
\begin{align}\label{dualProgram}
    \alpha^* &= \min \Lambda_p \qquad \text{subject to}\\
    & \Lambda_p + \Lambda_c(\alpha^v(C) - \alpha^u(C)) \geq \alpha^v(C) & \forall C\in \C,
\end{align}
with decision variables $\Lambda_p$ and $\Lambda_c$. 

To show that the optimum is attained by $C_{K_{d+1}}$, we must find a feasible solution to the dual program with $\Lambda_p = \alpha_K= \frac{2\lambda(1+\lambda)^d}{2(1+\lambda)^{d+1} - 1}$. Note that with $\Lambda_p = \alpha_K$ the constraint for $C_{K_{d+1}}$  holds with equality for any choice of $\Lambda_c$. In other words, it suffices to find some convex combination of the two local estimates $\alpha^u$ and $\alpha^v$ which is maximised by $C_{K_{d+1}}$ over all $C\in \C$.

Let $C_0$ be a configuration with $L_u = \emptyset$ for all $u\in H$ (in which case the edges of $H$ are immaterial, and so abusing notation we will refer to any one of these configurations as $C_0$). We find a candidate $\Lam_c$ by solving the constraint corresponding to $C_0$ with equality:
\begin{align*}
\alpha_K &= \Lambda_c(\alpha^u(C_0) - \alpha^v(C_0)) + \alpha^v(C_0) \\
&= (1- \Lam_c )\frac{2\lam}{1+ 2\lam} \, .
\end{align*}
This gives
\begin{align*}
\Lam_c &= 1 - \frac{\alpha_K}{2\lambda} (1+2\lambda) = \frac{\alpha_K}{2\lambda} \frac{(1+\lambda)^d - 1}{(1+\lambda)^d} \, .
\end{align*}
With this choice of $\Lam_c$, the general dual constraint is
\begin{align}
    \alpha_K &\geq \frac{\alpha_K}{2\lambda} \frac{(1+\lambda)^d-1}{(1+\lambda)^d} \alpha^u(C) + \frac{\alpha_K}{2\lambda} (1+2\lambda) \alpha^v(C) \, .
\end{align}
Using \eqref{alphaFormulas}, this becomes 
\begin{align}
\label{manipConstraint}
\frac{(\PCi{0})^\prime + \lambda (\PCi{12})^\prime}{2\PCi{0} - \PCi{12}} &\leq \frac{d(1+\lambda)^d}{(1+\lambda)^d-1}\,.
\end{align}
From this point on we may assume that $C$ has some non-empty colour list, since otherwise the configuration is equivalent to  $C_0$ and the constraint holds with equality by our choice of $\Lam_c$. This assumption tells us, among other things, that $(\PCi{0})^\prime > 0$ and $2\PCi{0} - \PCi{12} > 0$.

Our goal is now to show that \eqref{manipConstraint} holds for all $C$.  We consider the two terms separately.  

\begin{claim}
\label{claim:term2}
For any $C \ne C_0$,
\[  \frac{\lambda (\PCi{12})^\prime }{2\PCi{0} - \PCi{12}} \le \frac{d \lam (1+\lam)^{d-1}}{(1+\lam)^d -1}  \, ,\]
with equality if and only if the lists $L_u$ are all equal and $C$ has no dichromatic colourings.
\end{claim}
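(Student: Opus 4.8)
The plan is to reduce Claim~\ref{claim:term2} to a statement about two integers and one elementary monotone function. Write $a_i$ for the number of lists $L_u$ containing the colour $i$, so that, as recorded above, $\PCi{i} = (1+\lam)^{a_i}$, hence $\PCi{12} = (1+\lam)^{a_1} + (1+\lam)^{a_2}$ and $(\PCi{12})' = a_1(1+\lam)^{a_1-1} + a_2(1+\lam)^{a_2-1}$; since $C \ne C_0$, at least one $a_i \ge 1$.

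First I would replace the denominator $2\PCi{0} - \PCi{12}$ by the cruder quantity $\PCi{12} - 2$. The point is that
\[ \bigl(2\PCi{0} - \PCi{12}\bigr) - \bigl(\PCi{12} - 2\bigr) = 2\bigl(\PCi{0} - \PCi{1} - \PCi{2} + 1\bigr), \]
and $\PCi{0} - \PCi{1}$ is the total weight of list-respecting colourings of $H$ that use colour $2$ somewhere, while $\PCi{2} - 1$ is the weight of those using colour $2$ but never colour $1$; their difference is precisely the weight of the \emph{dichromatic} colourings of $C$, which is nonnegative, and is zero exactly when $C$ has no dichromatic colouring. Also $\PCi{12} - 2 = \bigl((1+\lam)^{a_1} - 1\bigr) + \bigl((1+\lam)^{a_2} - 1\bigr) > 0$ because some $a_i \ge 1$. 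So it suffices to prove $\frac{\lam (\PCi{12})'}{\PCi{12} - 2} \le \frac{d\lam (1+\lam)^{d-1}}{(1+\lam)^d - 1}$.

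Now I would observe that the left-hand side is $\lam$ times a weighted average: discarding the index $i$ with $a_i = 0$ (it contributes $0$ to both numerator and denominator), $\frac{(\PCi{12})'}{\PCi{12} - 2}$ is a convex combination of the numbers $\frac{a_i (1+\lam)^{a_i - 1}}{(1+\lam)^{a_i} - 1}$, $i \in \{1,2\}$ with $a_i \ge 1$, with positive weights $(1+\lam)^{a_i} - 1$. Hence it is enough to show that $g(m) := \frac{m(1+\lam)^{m-1}}{(1+\lam)^m - 1}$ is strictly increasing on $\{1, 2, \dots, d\}$, so that every term is at most $g(d)$, which is exactly the right-hand side. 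Setting $x = 1+\lam > 1$, one has $\bigl(\log g(m)\bigr)' = \frac1m - \frac{\log x}{x^m - 1}$, which is positive since $x^m - 1 > m \log x$ for $x > 1$ and $m \ge 1$ (the function $m \mapsto x^m - 1 - m \log x$ vanishes at $m = 0$ and has derivative $(\log x)(x^m - 1) > 0$); alternatively $g(m) \le g(d)$ cross-multiplies to the elementary inequality $\frac{1 - x^{-k}}{k} \le \frac{x^m - 1}{m}$ with $k = d - m \ge 0$. This monotonicity of $g$ is the computational heart of the claim.

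Finally, for the equality characterisation: equality forces equality in the first reduction, so $C$ has no dichromatic colouring; and in the convex combination every term of positive weight must equal $g(d)$, i.e. $a_i = d$ whenever $a_i \ge 1$ (here the \emph{strict} monotonicity of $g$ is used). Thus each $a_i \in \{0, d\}$, not both $0$, which forces every list $L_u$ to be the same set ($\{1,2\}$, $\{1\}$, or $\{2\}$); conversely each of these list patterns makes the convex combination equal $g(d)$, and then ``no dichromatic colouring'' gives equality throughout -- a condition automatic when the common list is $\{1\}$ or $\{2\}$ and equivalent to $H = K_d$ when it is $\{1,2\}$. I expect the main obstacle to be spotting the first reduction $2\PCi{0} - \PCi{12} \ge \PCi{12} - 2$: it is the one non-mechanical step, and it is what removes all dependence on $H$ and on $\PCi{0}$, after which the rest is routine calculus and bookkeeping.
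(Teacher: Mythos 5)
Your proof is correct and follows essentially the same route as the paper's: the reduction $2\PCi{0} - \PCi{12} \ge \PCi{12} - 2$ via the weight of dichromatic colourings, then the mediant/convex-combination step, then the monotonicity of $m \mapsto \frac{m(1+\lam)^{m-1}}{(1+\lam)^m - 1}$. Your equality analysis is a bit more detailed than the paper's but identical in substance.
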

\begin{proof}
Since the partition function $\PCi{0}$ is at least the total weight $\PCi{1} + \PCi{2} - 1$ of monochromatic colourings (with equality when $C$ has no dichromatic colourings), we have 
\begin{align}
    \frac{(\PCi{12})^\prime}{2\PCi{0} - \PCi{12}}
    \leq \frac{(\PCi{12})^\prime}{\PCi{12} - 2}
    = \frac{a_1 (1+\lambda)^{a_1-1} + a_2 (1+\lambda)^{a_2-1}}{(1+\lambda)^{a_1} + (1+\lambda)^{a_2} - 2}
\end{align}
(where, as above, $a_i$ is the number of vertices in $H$ allowed colour $i$ under the given boundary conditions), and so we need to show that 
\begin{align}
\label{eqlama1a2}
\frac{a_1 (1+\lambda)^{a_1-1} + a_2 (1+\lambda)^{a_2-1}}{(1+\lambda)^{a_1} + (1+\lambda)^{a_2} - 2} &\le \frac{d (1+\lam)^{d-1}}{(1+\lam)^d -1}  \, .
\end{align}
In general, to show that $(a+b)/(c+d) \leq t$ it suffices to show that $a/c \leq t$ and $b/d \leq t$. Thus it is enough to show that
\begin{equation}
\label{singleLameq}
\frac{a (1+\lambda)^{a - 1}}{(1+\lambda)^a - 1} \leq \frac{d (1+\lambda)^{d-1}}{(1+\lambda)^d - 1}
\end{equation}
whenever $1\leq a \leq d$. (Note that if either $a_1=0$ or $a_2=0$ then \eqref{eqlama1a2} reduces to \eqref{singleLameq}, and if both $a_1, a_2 =0$ then the configuration is $C_0$).   Indeed, it is not hard to check via calculus that the left hand side of \eqref{singleLameq} is increasing with $a$. This completes the proof of the inequality in Claim~\ref{claim:term2}.

We have equality in this final step when $a_1 = a_2 = d$ or when one is 0 and the other is $d$. So we have equality overall whenever the lists are all equal and there are no dichromatic colourings  (recall that we are assuming $C$ has some non-empty colouring list).
\end{proof}

\begin{claim}
\label{claim:term1}
For any $C \ne C_0$,
\[ \frac{(\PCi{0})^\prime  }{ 2\PCi{0} - \PCi{12} }   \le \frac{d(1+\lam)^{d-1} }{(1+\lam)^d -1  }\, ,\]
with equality if and only if the lists $L_u$ are all equal and $C$ has no dichromatic colourings.
\end{claim}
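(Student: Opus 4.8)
I would begin by reading off the numerator $(\PCi{0})^\prime$ combinatorially. Write $\PCi{0} = \sum_\chi \lambda^{|\chi|}$ as a sum over the colourings $\chi$ of $H$ satisfying the boundary conditions $\L$, where $|\chi|$ is the number of non-zero vertices of $\chi$. Splitting $|\chi|$ into the number of $1$'s and the number of $2$'s gives
\[ \lambda (\PCi{0})^\prime = \sum_\chi |\chi|\,\lambda^{|\chi|} = \sum_{u\in V(H)} Z_u^1 + \sum_{u\in V(H)} Z_u^2 , \]
where $Z_u^i$ denotes the total weight of the colourings $\chi$ with $\chi(u)=i$. Since $2\PCi{0}-\PCi{12} = (\PCi{0}-\PCi{1}) + (\PCi{0}-\PCi{2})$, while $\PCi{0}-\PCi{2}$ is precisely the weight of the colourings that use colour $1$ (and symmetrically $\PCi{0}-\PCi{1}$ for colour $2$), it is enough to establish the single-colour estimate
\[ \sum_{u\in V(H)} Z_u^1 \;\le\; \frac{\lambda\, d\,(1+\lambda)^{d-1}}{(1+\lambda)^d-1}\,\bigl(\PCi{0}-\PCi{2}\bigr) \]
together with its symmetric counterpart; adding the two and dividing by $\lambda(2\PCi{0}-\PCi{12})>0$ then yields the claim.

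To prove this single-colour estimate I would condition on where colour $2$ sits. Partition the colourings that use colour $1$ according to $S := \chi^{-1}(2)$, which may be any subset of $\{u : 2\in L_u\}$. Once $S$ is fixed, colour $1$ can occupy any non-empty subset of the set $F(S)$ of vertices lying outside $S$, non-adjacent in $H$ to $S$, and allowed colour $1$; the essential point is that $|F(S)|\le |V(H)\setminus S|\le d$. Hence the class of colourings with $\chi^{-1}(2)=S$ contributes $\lambda^{|S|}\,\lambda|F(S)|(1+\lambda)^{|F(S)|-1}$ to $\sum_u Z_u^1$ and $\lambda^{|S|}\bigl((1+\lambda)^{|F(S)|}-1\bigr)$ to $\PCi{0}-\PCi{2}$ --- exactly a one-colour system on $|F(S)|$ independent sites. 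The ratio of these two contributions equals $\lambda\,a(1+\lambda)^{a-1}/((1+\lambda)^a-1)$ with $a=|F(S)|\le d$ (both contributions vanishing when $a=0$), which by the monotonicity in $a$ recorded in the proof of Claim~\ref{claim:term2}, i.e.\ inequality~\eqref{singleLameq}, is at most $\lambda\,d(1+\lambda)^{d-1}/((1+\lambda)^d-1)$. Summing over all admissible $S$ gives the estimate.

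For the equality characterisation: equality in Claim~\ref{claim:term1} forces equality in both single-colour estimates, hence $|F(S)|\in\{0,d\}$ for every admissible $S$, and likewise with the roles of $1$ and $2$ swapped. The term $S=\emptyset$ forces $a_1,a_2\in\{0,d\}$ (here $a_i$ is the number of lists containing $i$, as above); since $C\neq C_0$, this leaves only the configurations in which all lists equal $\{1\}$, or all equal $\{2\}$, or all equal $\{1,2\}$. In the first two cases there are no dichromatic colourings and equality holds by a direct computation (all colourings use a single colour, so $\PCi{0}$ reduces to $(1+\lambda)^d$); in the third case the term $S=\{w\}$ forces every vertex $w$ of $H$ to have degree $d-1$, i.e.\ $H=K_d$, which again has no dichromatic colouring. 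Conversely, these are exactly the configurations $C\neq C_0$ whose lists are all equal and which have no dichromatic colouring.

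I expect the main obstacle to be the decomposition in the second paragraph: recognising that conditioning on the positions of one colour turns the other colour into a product measure over at most $d$ independent two-state sites is what allows the estimate to collapse onto the one-variable inequality~\eqref{singleLameq} already proved in Claim~\ref{claim:term2}. Once that is in place, both the inequality and the equality analysis are routine bookkeeping.
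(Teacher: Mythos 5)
Your proposal is correct and follows essentially the same route as the paper: you rewrite the ratio as the (weight form of the) quotient $\bigl(\E_C[X_1]+\E_C[X_2]\bigr)/\bigl({\mathbb P}_C[X_1>0]+{\mathbb P}_C[X_2>0]\bigr)$, reduce via the mediant inequality to a single-colour estimate, and prove that by conditioning on $\chi^{-1}(2)=S$, which turns colour $1$ into a product system on $a_S\le d$ free sites so that \eqref{singleLameq} applies. The equality analysis, including the observation that for full lists the case $S=\{w\}$ forces $H=K_d$, also matches the paper's characterisation.
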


\begin{proof}
We can write
\begin{align}
    \frac{\lambda (\PCi{0})^\prime}{2\PCi{0}- \PCi{12}} &= \frac{\lambda(\PCi{0})^\prime}{\PCi{0}} \cdot \frac{\PCi{0}}{(\PCi{0} - \PCi{1}) + (\PCi{0} - \PCi{2})}\\
    &= \frac{\E_C[X_1] + \E_C[X_2]}{{\mathbb P}_C[X_1 > 0] + {\mathbb P}_C[X_2 > 0]},
\end{align}
where now $X_i$ is the number of vertices coloured $i$ in a random colouring chosen from the Widom--Rowlinson model on $C$.  Noting that $\E_C[X_1] = 0$ whenever ${\mathbb P}_C[X_1 > 0] = 0$, it suffices as above to show that whenever colour $1$ is permitted anywhere in $C$,
\begin{align} \label{cond_exp}
    \frac{\E_C[X_1]}{{\mathbb P}_C[X_1 > 0]} = \E_C[X_1\ |\ X_1 > 0]
    \leq \frac{\lambda d (1+\lambda)^{d-1}}{(1+\lambda)^d - 1} = \E_{K_d}[X_1\ |\ X_1 > 0] \, ,
\end{align}
and similarly for $X_2$, but this will follow by symmetry.  

We can decompose the expectation as
\begin{align}
    \E_C[X_1\ |\ X_1 > 0]
    &= \sum_{S\subseteq V(H)} {\mathbb P}_C[\chi^{-1}(2) = S\ |\ X_1 > 0]\cdot \E_C[X_1\ |\ X_1 > 0 \land \chi^{-1}(2) = S] \,.
\end{align}
The partition function restricted to colourings satisfying $X_1 > 0$ and $\chi^{-1}(2) = S$ is just $P_S(\lambda) = \lambda^{\size{S}} ((1+\lambda)^{a_S} - 1)$, where $a_S$ is the number of vertices in $H\setminus S$ which are allowed colour $1$ and are not adjacent to any vertex of $S$. The conditional expectation is then
\begin{align}
    \E_C[X_1\ |\ X_1 > 0 \land \chi^{-1}(2) = S]
    &= \frac{a_S \lambda (1+\lambda)^{a_S - 1}}{(1+\lambda)^{a_S} - 1}
    \leq \frac{d \lambda (1+\lambda)^{d-1}}{(1+\lambda)^d - 1}
\end{align}
with equality precisely when $S$ is empty and $1$ is available for every vertex. That is,
\begin{align}
    \E_C[X_1\ |\ X_1 > 0]
    \leq \sum_{S\subseteq V(H)} {\mathbb P}_C[\chi^{-1}(2) = S\ |\ X_1 > 0] \cdot \frac{d \lambda (1+\lambda)^{d-1}}{(1+\lambda)^d - 1}
    = \frac{\lambda d (1+\lambda)^{d-1}}{(1+\lambda)^d - 1},
\end{align}
as desired.  We have equality in \eqref{cond_exp} when ${\mathbb P}_C[a_S = d\ |\ X_1 > 0] = 1$, which holds for the configurations where $1$ is available to every vertex but which have no dichromatic colourings. That is, for equality to hold in the claim $C$ must have no dichromatic colourings, and any colour which is available to some vertex $u$ must be available to every vertex (so the lists must be identical).
\end{proof}

Adding the inequalities in Claims \ref{claim:term1} and \ref{claim:term2} shows that \eqref{manipConstraint} holds for all $C$, proving optimality of $K_{d+1}$.

\subsection{Uniqueness}

\begin{lemma}
    The distribution induced by $K_{d+1}$ is the unique optimum of the LP relaxation \eqref{primalProgram}.
\end{lemma}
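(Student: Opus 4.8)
The plan is to exploit complementary slackness for the primal-dual pair \eqref{primalProgram}--\eqref{dualProgram}. We have already exhibited a dual feasible solution with $\Lam_p = \alpha_K$, and the primal solution $q(C_{K_{d+1}}) = 1$ is feasible with the same objective value, so both are optimal. Complementary slackness then says that any optimal primal solution $q$ must be supported only on configurations $C$ for which the dual constraint holds with equality, i.e.\ on the set of $C$ achieving equality in \eqref{manipConstraint}. So the first step is to identify exactly that equality set.

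Next I would combine the equality analyses from Claims \ref{claim:term1} and \ref{claim:term2}: equality in \eqref{manipConstraint} forces equality in both claims simultaneously, which by the conclusions of those claims happens precisely when the lists $L_u$ are all equal and $C$ has no dichromatic colourings, together with the degenerate case $C = C_0$ (where equality holds by the choice of $\Lam_c$). Thus an optimal $q$ is supported on $C_{K_{d+1}}$, on $C_0$, and on configurations $C = (H,\L)$ with all lists equal to a common nonempty $L \subseteq \{1,2\}$ and no dichromatic colourings. I would argue that ``no dichromatic colourings with all lists equal to a nonempty $L$'' essentially forces $L$ to be a singleton unless $H$ has no edges — but even the single-colour and empty-list configurations need to be ruled out, and this is where the second primal constraint $\E_C[\alpha^v(C) - \alpha^u(C)] = 0$ enters.

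The key remaining point is that for every configuration $C$ in the support other than $C_{K_{d+1}}$, the dual constraint corresponding to $C_0$ was used with equality, so these configurations all lie in the same face; to pin down $q$ we use the balance constraint. Concretely, I would compute $\alpha^v(C) - \alpha^u(C)$ for each such $C$ and show it has a definite sign (I expect $\alpha^v(C) > \alpha^u(C)$ for $C_0$ and for the singleton-list configurations, reflecting that in $K_{d+1}$ the centre and its neighbours are symmetric while in these boundary-degenerate configurations the centre is more likely to be coloured than its constrained neighbours). If all the non-$K_{d+1}$ configurations in the support have $\alpha^v - \alpha^u$ of one strict sign, then the only way to satisfy $\E_C[\alpha^v(C) - \alpha^u(C)] = 0$ while $C_{K_{d+1}}$ itself contributes $0$ is to put zero mass on all of them, forcing $q(C_{K_{d+1}}) = 1$.

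The main obstacle I anticipate is verifying the strict sign of $\alpha^v(C) - \alpha^u(C)$ for the whole family of extremal configurations — those with a common nonempty list and no dichromatic colourings — rather than just for $C_0$. One has to check that the cases where the list is a singleton $\{1\}$ on all of $H$, or where $H$ has restricted structure forcing no dichromatic colourings, cannot accidentally have $\alpha^v = \alpha^u$. I would handle this by writing both quantities via the formulas in \eqref{alphaFormulas} in terms of $a_1, a_2$ and the edge structure of $H$, reducing to an elementary inequality; the no-dichromatic-colourings condition should make $\PCi{0}$ explicit enough that the comparison becomes a one-variable calculus check analogous to the monotonicity of \eqref{singleLameq}. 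If some extremal configuration genuinely has $\alpha^v = \alpha^u$, the fallback is to note it must still satisfy all the other dual constraints with equality, and iterate the complementary-slackness argument on the smaller face until only $C_{K_{d+1}}$ survives.
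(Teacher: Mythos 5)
Your proposal follows essentially the same route as the paper: complementary slackness restricts the support of any optimal $q$ to configurations with identical lists and no dichromatic colourings (namely $C_0$, the singleton-list configurations, and $C_{K_{d+1}}$), and then the balance constraint $\E_C[\alpha^v - \alpha^u] = 0$ together with the strict inequality $\alpha^u(C) < \alpha^v(C)$ for the non-$K_{d+1}$ cases forces all mass onto $C_{K_{d+1}}$. The sign check you were worried about does go through by direct computation (e.g.\ $\alpha^u(C_0)=0<\alpha^v(C_0)$ and the singleton-list case differs by $\lambda/(\lambda+(1+\lambda)^{d+1})>0$), so no fallback is needed; your only slip is the parenthetical claim that $L=\{1,2\}$ with no dichromatic colourings could occur when $H$ has no edges — in fact it forces $H=K_d$, i.e.\ $C=C_{K_{d+1}}$.
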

\begin{proof}
    Complementary slackness for our dual solution says that any optimal primal solution is supported only on configurations $C$ with identical boundary lists and no dichromatic colourings. These fall into three categories:
    \begin{description}
        \item[Case 0] $L_u = \emptyset$ for all $u$. In this case the edges of $H$ are immaterial, as none of $H$ can be coloured. This is the configuration $C_0$ above.
        \item[Case 1] $L_u = \{i\}$ for all $u$ (for $i = 1$ or $2$). The edges of $H$ are again immaterial, as every colouring of $H$ with only colour $i$ is allowed. Call this configuration $C_1$.
        \item[Case 2] $L_u = \{1,2\}$ for all $u$. In this case the prohibition on dichromatic colourings requires that $C = C_{K_{d+1}}$.
    \end{description}
    We can calculate $\alpha^v(C)$ and $\alpha^u(C)$ for each case. For Case 0 we have
    \[\alpha^v(C_0) = \frac{2\lambda}{1 + 2\lambda} \qquad\text{and}\qquad \alpha^u(C_0) = 0.\]
    For Case 1 we have
    \[\alpha^v(C_1) = \frac{\lambda + \lambda(1+\lambda)^d}{\lambda + (1+\lambda)^{d+1}} 
        \qquad\text{and}\qquad \alpha^u(C_1) = \frac{\lambda (1+\lambda)^d}{\lambda + (1+\lambda)^{d+1}}.\]
    And of course, for Case 2 we have
    \[\alpha^v(K_d) = \alpha^u(K_d) = \alpha_K.\]
    In both Case 0 and Case 1 we have $\alpha^u < \alpha^v$, so the only convex combination $q$ of the three cases giving $\sum_C q(C) \alpha^u(C) = \sum_C q(C) \alpha^v(C)$ (as is required for feasibility) is the one which puts all of the weight on $C_{K_{d+1}}$.
\end{proof}

\section{Distinct activities}
It is also natural to consider  a weighted version of the Widom--Rowlinson model with distinct activities $\lambda_1, \lambda_2$ for the two colours, so that the configuration $\chi$ is chosen according to the distribution
\begin{align*}
{\mathbb P}[\chi] &= \frac{  \lambda_1^{X_1(\chi)} \lambda_2^{X_2(\chi)} }{  P_G(\lambda_1, \lambda_2) }
\end{align*}
where the partition function is
\begin{align*}
P_G(\lambda_1,\lambda_2) &= \sum_{\chi \in  \Omega(G)}  \lambda_1^{X_1(\chi)} \lambda_2^{X_2(\chi)}.
\end{align*}
We can ask which $d$-regular graphs maximise $P(\lambda_1, \lambda_2)^{1/|V(G)|}$.
\begin{conj}
\label{conj:partitionUnequal}
For any $\lam_1, \lam_2 >0$, and any $d$-regular graph $G$,
\begin{equation}
\label{partitionstatement}
    P_G(\lam_1,\lam_2) \le  P_{K_{d+1}}(\lam_1,\lam_2)^{|V(G)|/(d+1)}.
\end{equation}
\end{conj}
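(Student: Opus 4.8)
The plan is to rerun the occupancy method of Section~\ref{sec:proof} with the single activity $\lam$ replaced by the pair $(\lam_1,\lam_2)$, keeping track of the two colours separately. \emph{Step 1: reduce to a pointwise occupancy bound.} Since $P_G(0,0)=1$, integrating along the ray $t\mapsto(t\lam_1,t\lam_2)$ for $t\in(0,1]$ and using $\mu_i\,\partial_{\mu_i}\log P_G=\E[X_i]$ gives
\[
\frac{1}{\size{V(G)}}\log P_G(\lam_1,\lam_2)=\int_0^1\frac{\alpha_G(t\lam_1,t\lam_2)}{t}\,dt,\qquad \alpha_G(\mu_1,\mu_2):=\frac{\E[X_1+X_2]}{\size{V(G)}},
\]
where $\alpha_G(\mu_1,\mu_2)$ is the total occupancy fraction of the model with activities $(\mu_1,\mu_2)$. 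Exactly as in the proof of Corollary~\ref{cor:partition}, it then suffices to prove the analogue of Theorem~\ref{thm:main}: for every $d$-regular $G$ and all $\mu_1,\mu_2>0$,
\[
\alpha_G(\mu_1,\mu_2)\le\alpha_{K_{d+1}}(\mu_1,\mu_2)=\frac{\mu_1(1+\mu_1)^d+\mu_2(1+\mu_2)^d}{(1+\mu_1)^{d+1}+(1+\mu_2)^{d+1}-1},
\]
with equality iff $G$ is a union of $K_{d+1}$'s. On either coordinate axis both sides collapse to $\mu_i/(1+\mu_i)$, so the bound is vacuous there, consistent with the equality case $\lam_1\to0$ or $\lam_2\to0$ of \eqref{partitionstatement}.

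\emph{Step 2: the linear program.} For a configuration with boundary conditions $C=(H,\L)$ having $a_i$ lists containing colour $i$, put $\PCi{i}=(1+\mu_i)^{a_i}$, $\PC=\PCi{0}+\mu_1\PCi{1}+\mu_2\PCi{2}$, $\alpha_i^v(C)=\mu_i\PCi{i}/\PC$, and let $\alpha_i^u(C)$ be the colour-$i$ analogue of the local neighbourhood estimate in \eqref{alphaFormulas}. By the spatial Markov property these depend only on $C$, $\alpha_G=\E_C[\alpha_1^v+\alpha_2^v]$, and for every $d$-regular $G$ one has $\E_C[\alpha_i^v]=\E_C[\alpha_i^u]$ for each $i$. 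The relaxation is to maximise $\sum_C q(C)(\alpha_1^v(C)+\alpha_2^v(C))$ over probability distributions $q$ on $\C$ subject to the \emph{two} balance constraints $\sum_C q(C)(\alpha_i^v(C)-\alpha_i^u(C))=0$, $i=1,2$. Its dual has multipliers $\Lam_p,\Lam_{c,1},\Lam_{c,2}$ with one constraint per $C\in\C$,
\[
\Lam_p+\Lam_{c,1}\big(\alpha_1^v(C)-\alpha_1^u(C)\big)+\Lam_{c,2}\big(\alpha_2^v(C)-\alpha_2^u(C)\big)\ge\alpha_1^v(C)+\alpha_2^v(C),
\]
and it is enough to exhibit a feasible dual solution with $\Lam_p=\alpha_K:=\alpha_{K_{d+1}}(\mu_1,\mu_2)$, since $C_{K_{d+1}}$ then attains the optimum.

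\emph{Step 3: the dual solution and uniqueness.} With $\Lam_p=\alpha_K$ the constraint at $C_{K_{d+1}}$ is automatically tight for any $\Lam_{c,i}$. I would pin down the multipliers by forcing equality at the two ``monochromatic-full'' configurations $C_1^{(i)}$ (all lists equal to $\{i\}$): a short computation gives $\alpha_1^v(C_1^{(1)})=\alpha_1^u(C_1^{(1)})=\mu_1(1+\mu_1)^d/\big((1+\mu_1)^{d+1}+\mu_2\big)$, $\alpha_2^v(C_1^{(1)})=\mu_2/\big((1+\mu_1)^{d+1}+\mu_2\big)$ and $\alpha_2^u(C_1^{(1)})=0$, so the $C_1^{(1)}$ constraint determines $\Lam_{c,2}$ and, symmetrically, $C_1^{(2)}$ determines $\Lam_{c,1}$. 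It then remains to verify the constraint for all other $C$, following Claims~\ref{claim:term1} and \ref{claim:term2}: treat the contributions of $\PCi{0}$ and of $\mu_1\PCi{1},\mu_2\PCi{2}$ to $\PC$ separately; bound $\PCi{0}\ge\PCi{1}+\PCi{2}-1$ (equality iff $C$ has no dichromatic colouring); for the $\PCi{i}$-terms use ``$\tfrac{a+b}{c+d}\le t$ whenever $\tfrac ac,\tfrac bd\le t$'' to reduce, for each colour, to $\tfrac{a\mu_i(1+\mu_i)^{a-1}}{(1+\mu_i)^{a}-1}\le\tfrac{d\mu_i(1+\mu_i)^{d-1}}{(1+\mu_i)^{d}-1}$ for $1\le a\le d$, which holds by the monotonicity already used in Claim~\ref{claim:term2}; and for the $\PCi{0}$-term use the conditional-expectation decomposition of Claim~\ref{claim:term1}, conditioning on the colour-$2$ set $S$, to obtain $\E_C[X_1\mid X_1>0]\le d\mu_1(1+\mu_1)^{d-1}/((1+\mu_1)^d-1)$ and symmetrically for $X_2$ --- the bound for colour $1$ does not involve $\mu_2$, so this part carries over verbatim. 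The equality analysis should again give equality iff all lists are equal and $C$ has no dichromatic colouring, i.e.\ $C\in\{C_0,C_1^{(1)},C_1^{(2)},C_{K_{d+1}}\}$. Uniqueness then follows from complementary slackness as in Section~\ref{sec:proof}: on each of $C_0,C_1^{(1)},C_1^{(2)}$ at least one deficit $\alpha_i^v-\alpha_i^u$ is strictly positive ($C_0$ has both), so the only distribution supported on these four configurations meeting both balance constraints is $q(C_{K_{d+1}})=1$.

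\emph{Main obstacle.} The crux is the dual-feasibility step. In the single-activity case the dual constraint collapsed, after a clean substitution, into the single inequality \eqref{manipConstraint}, split evenly by Claims~\ref{claim:term1}--\ref{claim:term2}. Here the two colours enter with different bases $1+\mu_1$ and $1+\mu_2$ and with \emph{asymmetric} multipliers $\Lam_{c,1}\neq\Lam_{c,2}$, so one must check that the candidates forced by $C_1^{(1)},C_1^{(2)}$ satisfy \emph{every} remaining dual constraint --- in particular the one at $C_0$, which was a free parameter in the symmetric argument; reconciling the various tightness conditions with only two multipliers is exactly where the symmetric proof does not obviously transfer. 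It is conceivable that this two-constraint relaxation fails to be tight for sufficiently unequal $(\mu_1,\mu_2)$, in which case one would have to enrich the LP with further valid local constraints (for instance second-moment constraints relating $\E_C[X_i]$ and $\E_C[X_i^2]$, or constraints coupling the two colours) and rerun the duality argument; designing and verifying such constraints would be the real work. Everything else is a careful but routine rerun of Section~\ref{sec:proof} with two activities.
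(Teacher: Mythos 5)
The statement you are trying to prove is left as a \emph{conjecture} in the paper: the authors do not prove \eqref{partitionstatement}, they only show that it would follow from Conjecture~\ref{conj:linearComb} by integrating $\frac1n\log P_G$ along the path $x\mapsto(\lam_1-\lam_2+x,\,x)$, whose derivative is (up to a positive factor depending only on $x$) the \emph{weighted} occupancy fraction $\overline\alpha_G$. Your Step~1 is a legitimate alternative reduction: integrating along the ray $t\mapsto(t\lam_1,t\lam_2)$ is correct (the identity $\mu_i\partial_{\mu_i}\log P_G=\E[X_i]$ does give $\frac{d}{dt}\log P_G(t\lam_1,t\lam_2)=\E[X_1+X_2]/t$), and your formula for $\alpha_{K_{d+1}}(\mu_1,\mu_2)$ checks out. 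But note that this trades the paper's required pointwise bound (on $\overline\alpha_G$, which weights colour $1$ by $\lam_2$ and colour $2$ by $\lam_1$) for a \emph{different} unproven pointwise bound, on the unweighted sum $\alpha_G^1+\alpha_G^2$. These are genuinely different statements for $\mu_1\neq\mu_2$; neither is known, and it is not clear a priori that the unweighted one is the more tractable or even that both are true. You should flag that your Step~1 is not a reduction to Conjecture~\ref{conj:linearComb} but to a new conjecture.

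The decisive gap is the one you yourself identify in Step~3, and it is not a technicality that a ``careful rerun'' will dispose of. With two balance constraints you have only two multipliers $\Lam_{c,1},\Lam_{c,2}$, but your own equality analysis predicts tightness at \emph{three} non-trivial configurations $C_0$, $C_1^{(1)}$, $C_1^{(2)}$ (plus $C_{K_{d+1}}$, which is automatically tight once $\Lam_p=\alpha_K$). Forcing equality at $C_1^{(1)}$ and $C_1^{(2)}$ determines both multipliers, and the constraint at $C_0$ --- where $\alpha_i^u=0$ and $\alpha_i^v=\mu_i/(1+\mu_1+\mu_2)$ --- becomes an identity that must be \emph{verified}, not arranged; in the single-activity proof this was exactly the configuration used to fix the one free multiplier, and there is no symmetry left here to guarantee consistency. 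Likewise, the clean splitting of \eqref{manipConstraint} into Claims~\ref{claim:term1} and~\ref{claim:term2} relied on both terms being compared to the \emph{same} right-hand side $\frac{d(1+\lam)^{d-1}}{(1+\lam)^d-1}$; with asymmetric multipliers and two bases $1+\mu_1$, $1+\mu_2$ the analogous decomposition produces cross terms (e.g.\ the contribution of $\mu_2\PCi{2}$ to $\PC$ against the colour-$1$ part of the numerator) for which you have proposed no bound. So the proposal is an outline of a plausible attack on an open problem with the crucial step --- dual feasibility --- unestablished; as written it does not constitute a proof, and you correctly anticipate that the two-constraint LP may simply not be tight for unequal activities.
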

Now denote by $\alpha_G^{1}(\lam_1,\lam_2)$ and $\alpha_G^{2}(\lam_1,\lam_2)$ the expected fraction of vertices of $G$ that receive colours $1$ and $2$ respectively in this model.
\begin{conj}
\label{conj:linearComb}
For any $\lam_1, \lam_2 >0$, the \emph{weighted occupancy fraction}
\begin{align*}
\overline \alpha_G(\lambda_1, \lambda_2) = \frac{ \lam_2 \alpha_G^{1} (\lam_1,\lam_2) +\lam_1 \alpha_G^{2} (\lam_1,\lam_2)}{\lam_1 + \lam_2}
\end{align*}
is maximised over all $d$-regular graphs by $K_{d+1}$.
\end{conj}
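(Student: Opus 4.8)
\emph{Towards a proof.} The plan is to run the occupancy / linear-programming argument of Section~\ref{sec:proof} for the two-activity model, carrying the two colours separately, and to observe that Conjecture~\ref{conj:linearComb} implies Conjecture~\ref{conj:partitionUnequal}. The point of the particular weighting is this: on the line $\lambda_1(t)=t+b$, $\lambda_2(t)=t$ parallel to the diagonal (with $b\ge 0$), \eqref{eqalphadef} gives
\[
\frac{d}{dt}\Bigl(\tfrac{1}{|V(G)|}\log P_G(t+b,t)\Bigr)=\frac{2t+b}{t(t+b)}\;\overline{\alpha}_G(t+b,t),
\]
while at $t=0$ one has $\tfrac{1}{|V(G)|}\log P_G(b,0)=\log(1+b)$ for every $d$-regular $G$, and likewise $\tfrac{1}{d+1}\log P_{K_{d+1}}(b,0)=\log(1+b)$. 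Hence a pointwise bound $\overline{\alpha}_G\le\overline{\alpha}_{K_{d+1}}$ integrates (the prefactor being positive) to $\tfrac{1}{|V(G)|}\log P_G\le\tfrac{1}{d+1}\log P_{K_{d+1}}$ for $\lambda_1\ge\lambda_2$, and then for all $\lambda_1,\lambda_2$ by the $1\leftrightarrow 2$ symmetry; so it suffices to prove Conjecture~\ref{conj:linearComb}.

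For a configuration $C=(H,\L)$ with $a_i$ lists containing colour $i$, keep $P_C^{(i)}=(1+\lambda_i)^{a_i}$ and $P_C=P_C^{(0)}+\lambda_1 P_C^{(1)}+\lambda_2 P_C^{(2)}$, where $P_C^{(0)}$ is the two-activity partition function of colourings of $H$ respecting $\L$. By the spatial Markov property $\alpha_i^v(C):=\mathbb{P}[\chi(v)=i\mid C]=\lambda_i P_C^{(i)}/P_C$ and $\alpha_i^u(C):=\tfrac{1}{d}\sum_{u\in H}\mathbb{P}[\chi(u)=i\mid C]=\lambda_i\bigl(\partial_{\lambda_i}P_C^{(0)}+\lambda_i\,\partial_{\lambda_i}P_C^{(i)}\bigr)/(d\,P_C)$. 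As in \eqref{eqAlpEq}, $\alpha_G^i=\E_C[\alpha_i^v(C)]=\E_C[\alpha_i^u(C)]$ for \emph{each} $i$; writing $\overline{\alpha}^\bullet(C)=\bigl(\lambda_2\alpha_1^\bullet(C)+\lambda_1\alpha_2^\bullet(C)\bigr)/(\lambda_1+\lambda_2)$ gives $\overline{\alpha}_G=\E_C[\overline{\alpha}^v(C)]=\E_C[\overline{\alpha}^u(C)]$. The relaxed LP maximises $\sum_C q(C)\overline{\alpha}^v(C)$ over probability distributions $q$ on $\C$ subject to the \emph{two} balance constraints $\sum_C q(C)\bigl(\alpha_i^v(C)-\alpha_i^u(C)\bigr)=0$ for $i=1,2$; its dual asks for $\Lambda_p,\Lambda_{c,1},\Lambda_{c,2}$ with $\Lambda_p+\sum_i\Lambda_{c,i}\bigl(\alpha_i^v(C)-\alpha_i^u(C)\bigr)\ge\overline{\alpha}^v(C)$ for all $C$.

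I would take $\Lambda_p=\overline{\alpha}_{K_{d+1}}(\lambda_1,\lambda_2)=\lambda_1\lambda_2\bigl((1+\lambda_1)^d+(1+\lambda_2)^d\bigr)/\bigl((\lambda_1+\lambda_2)P_{K_{d+1}}\bigr)$; the constraint at $C_{K_{d+1}}$ then holds with equality for any $\Lambda_{c,i}$ since $K_{d+1}$ is vertex-transitive. Pin down the multipliers by demanding the constraints at the monochromatic-list configurations $C_1$ (all lists $\{1\}$) and $C_2$ (all lists $\{2\}$) be tight; because $\alpha_1^v(C_1)=\alpha_1^u(C_1)$, the $C_1$ condition directly yields $\Lambda_{c,2}=\bigl(\overline{\alpha}^v(C_1)-\overline{\alpha}_{K_{d+1}}\bigr)/\bigl(\alpha_2^v(C_1)-\alpha_2^u(C_1)\bigr)$, which after a short cancellation is
\[
\Lambda_{c,2}=\frac{\lambda_1\bigl((1+\lambda_2)^d-1\bigr)\bigl(1-(\lambda_1-\lambda_2)(1+\lambda_1)^d\bigr)}{(\lambda_1+\lambda_2)\,P_{K_{d+1}}},
\]
with $\Lambda_{c,1}$ the same under $1\leftrightarrow 2$; one then checks (a pleasant cancellation) that the constraint at the empty-list configuration $C_0$ is automatically tight too. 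The remaining and main task is to verify the general dual constraint for all $C\in\C$, with equality precisely when the lists are all equal and $C$ has no dichromatic colourings --- i.e.\ equality set $\{C_0,C_1,C_2,C_{K_{d+1}}\}$. After substituting and clearing $P_C$ this is an inequality among $P_C^{(0)},P_C^{(1)},P_C^{(2)}$ and their first partials, which I would attack by the two-term decomposition of Section~\ref{sec:proof}: a bound on the $v$-contribution (the $\partial_{\lambda_i}P_C^{(i)}$ terms) handled as in Claim~\ref{claim:term2} by splitting off colour $1$ and colour $2$, bounding below by the monochromatic weight $(P_C^{(1)}-1)+(P_C^{(2)}-1)$, and using monotonicity in $a$ of $a\lambda(1+\lambda)^{a-1}/((1+\lambda)^a-1)$ at $\lambda=\lambda_1$ and at $\lambda=\lambda_2$; and a bound on the $N(v)$-contribution when $\chi(v)=0$ (the $\partial_{\lambda_i}P_C^{(0)}$ terms) handled as in Claim~\ref{claim:term1} by rewriting a ratio as $\bigl(\E_C[X_1]+\E_C[X_2]\bigr)/\bigl(\mathbb{P}_C[X_1>0]+\mathbb{P}_C[X_2>0]\bigr)$ and bounding $\E_C[X_i\mid X_i>0]\le\E_{K_d}[X_i\mid X_i>0]$ --- this last bound survives almost verbatim because, conditioning on the set of vertices coloured $3-i$, the colour-$i$ part of the model sees only the activity $\lambda_i$. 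Given the certificate and this equality analysis, uniqueness follows by complementary slackness exactly as in the Uniqueness Lemma: an optimal $q$ is supported on $\{C_0,C_1,C_2,C_{K_{d+1}}\}$, and since $\overline{\alpha}^v>\overline{\alpha}^u$ strictly at $C_0,C_1,C_2$ but $\overline{\alpha}^v=\overline{\alpha}^u$ at $C_{K_{d+1}}$, the balance constraints push all the mass onto $C_{K_{d+1}}$.

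The main obstacle is this last verification, and it is genuinely harder than in the symmetric case. With a \emph{single} slope multiplier --- the literal analogue of \eqref{dualProgram} applied to $\overline{\alpha}^v-\overline{\alpha}^u$ --- the reduction provably fails: clearing $P_C$ leaves asymmetric coefficients on $P_C^{(1)}$ and $P_C^{(2)}$, of the form $(\lambda_1-\lambda_2-1)$ and $(\lambda_2-\lambda_1-1)$, and when $|\lambda_1-\lambda_2|>1$ these spoil the inequality on configurations whose lists are all $\{1\}$ (or all $\{2\}$). So two multipliers are essential, and the substance of the proof is to show that with the multipliers above the cleared inequality --- whose left-hand side is $d\,\overline{\alpha}_{K_{d+1}}P_C^{(0)}$ plus asymmetric multiples of $P_C^{(1)},P_C^{(2)}$, \emph{not} the tidy $2P_C^{(0)}-P_C^{(1)}-P_C^{(2)}$ of the symmetric case --- still decouples into colour-$1$ and colour-$2$ pieces of exactly the shape controlled by Claims~\ref{claim:term1} and~\ref{claim:term2}, with right-hand sides matching the $C_1$, $C_2$ tightness values; making that decoupling rigorous, and ruling out extra equality cases, is where the real effort lies. (The boundary integration used to pass to Conjecture~\ref{conj:partitionUnequal} is routine, since the integrand there is just $\tfrac{d}{dt}\tfrac{1}{|V(G)|}\log P_G$.)
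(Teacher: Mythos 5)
This statement is a \emph{conjecture} in the paper: the authors do not prove it, and the only argument they supply in this section is the implication Conjecture~\ref{conj:linearComb} $\Rightarrow$ Conjecture~\ref{conj:partitionUnequal}, which your opening paragraph reproduces correctly (your formula $\frac{d}{dt}\frac{1}{|V(G)|}\log P_G(t+b,t)=\frac{2t+b}{t(t+b)}\overline{\alpha}_G$ matches the paper's computation of $dF_G/dx$). Your LP setup with two balance constraints, one per colour, is the natural generalisation of \eqref{primalProgram}--\eqref{dualProgram}, and your explicit candidates for $\Lambda_p$ and $\Lambda_{c,2}$ check out: I verified that forcing tightness at $C_1$ does yield
$\Lambda_{c,2}=\lambda_1\bigl((1+\lambda_2)^d-1\bigr)\bigl(1-(\lambda_1-\lambda_2)(1+\lambda_1)^d\bigr)\big/\bigl((\lambda_1+\lambda_2)P_{K_{d+1}}\bigr)$, which also degenerates correctly to the paper's $\Lambda_c$ at $\lambda_1=\lambda_2$. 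Your observation that a single slope multiplier cannot work when $|\lambda_1-\lambda_2|$ is large is a genuinely useful piece of information that the paper does not contain.

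Nevertheless this is not a proof, and you say as much. The entire content of the argument is the verification of the general dual constraint
$\Lambda_p+\sum_i\Lambda_{c,i}\bigl(\alpha_i^v(C)-\alpha_i^u(C)\bigr)\ge\overline{\alpha}^v(C)$ for \emph{every} $C\in\C$, and that step is only gestured at. In the symmetric case the cleared constraint \eqref{manipConstraint} has the denominator $2P_C^{(0)}-P_C^{(12)}$, whose positivity and whose lower bound by the monochromatic weight are exactly what make Claims~\ref{claim:term1} and~\ref{claim:term2} decouple into single-colour inequalities with a common right-hand side. As you note, with your asymmetric multipliers the cleared inequality has \emph{different} coefficients on $P_C^{(1)}$ and $P_C^{(2)}$ (and $\Lambda_{c,2}$ can even change sign, since $1-(\lambda_1-\lambda_2)(1+\lambda_1)^d<0$ for large $\lambda_1-\lambda_2$), so it is not established that the expression splits into two ratios each bounded by its $C_i$-tightness value, nor that the relevant denominators stay positive. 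Absent that verification --- and absent a proof that no \emph{additional} equality cases arise, which your uniqueness argument also relies on --- the conjecture remains open. What you have is a correct reduction and a plausible certificate, not a proof.
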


In fact, Conjecture \ref{conj:linearComb} implies Conjecture~\ref{conj:partitionUnequal}.  To see this, assume $\lam_1 \ge \lam_2$, and let $F_G(x) = \frac{1}{n} \log P_G(\lam_1- \lam_2 + x, x)$.    We  have
\begin{align*}
    \frac{1}{n} \log P_G(\lam_1,\lam_2) &= F_G(\lam_2) = F_G(0) + \int_0^{\lam_2} \frac{d F_G}{dx} (x) \, dx 
\end{align*}
$F_G(0) = \frac{1}{n} \log P_G(\lam_1 - \lam_2,0) = \log(1+ \lam_1 -\lam_2)$ for all graphs $G$, and so if we can show that for all $0\le x \le \lam_2$, $ \frac{d F_G}{dx} (x) $ is maximised when $G= K_{d+1}$, then we obtain (the log of) inequality \eqref{partitionstatement}. We compute:
\begin{align*}
    \frac{d F_G}{dx} (x) &= \frac{1}{n} \frac{ \frac{d}{dx}  P_G( \lam_1 - \lam_2 +x, x) }  { P_G( \lam_1 - \lam_2 +x, x) } \\
    &=\frac{1}{n} \frac{ \sum_\chi \frac{x X_1 +(\lam_1 -\lam_2+x)X_2 }{x(\lam_1-\lam_2+x)  } (\lam_1 -\lam_2+x)^{X_1} \cdot x^{X_2} }{ P_G( \lam_1 - \lam_2 +x, x) } \\
    &= \frac{1}{x(\lam_1-\lam_2+x)  } \frac{1}{n} \frac{ \sum_\chi(x X_1 +(\lam_1 -\lam_2+x)X_2 )(\lam_1 -\lam_2 +x)^{X_1} \cdot x^{X_2} }{ P_G( \lam_1 - \lam_2 +x, x) } \\
    &= \frac{1}{x(\lam_1-\lam_2+x)  } \left [ x \alpha_G^{(1)} (\lam_1-\lam_2+x,x) +(\lam_1-\lam_2+x) \alpha_G^{(2)} (\lam_1-\lam_2+x,x)\right ].
\end{align*}
Conjecture \ref{conj:linearComb} implies that this is maximised by $K_{d+1}$.

\end{document}